\newcommand\be{\begin{equation}}
\newcommand\ee{\end{equation}}
\newcommand\nn{\nonumber}
\newcommand\p{\partial}
\DeclareMathOperator{\odd}{odd}
\DeclareMathOperator{\even}{even}
\DeclareMathOperator{\res}{Res}
\newcommand\normordboson{ {\scriptstyle {{*}\atop{*}}} }
\newcommand{\<}{\left <}
\renewcommand{\>}{\right >}
\DeclareMathOperator{\gl}{\mathfrak{gl}}
\newtheorem{theorem}{Theorem}
\newtheorem{lemma}{Lemma}[section]
\newtheorem{proposition}[lemma]{Proposition}
\newtheorem{corollary}[lemma]{Corollary}
\newtheorem{remark}{Remark}[section]
\newtheorem*{theorem*}{Theorem}
\numberwithin{equation}{section}
\title[Cut-and-join operators for higher Weil--Petersson volumes]{Cut-and-join operators for higher Weil--Petersson volumes}
\author{Alexander Alexandrov}
\address{Center for Geometry and Physics, Institute for Basic Science (IBS), Pohang 37673, Korea
}
\email{ {\tt alexandrovsash at gmail.com}}
\subjclass[2020]{37K10, 14N35, 81R10, 05A15}
\date{\today}
\begin{document}

\begin{abstract} 
In this paper, we construct the cut-and-join operator description for the generating functions of all intersection numbers of $\psi$, $\kappa$, and $\Theta$ classes on the moduli spaces  $\overline{\mathcal M}_{g,n}$. The cut-and-join operators define an algebraic version of topological recursion. This recursion allows us to compute all these intersection numbers recursively. For the specific values of parameters, the generating functions describe the volumes of moduli spaces of (super) hyperbolic Riemann surfaces with geodesic boundaries, which are also related to the Jackiw--Teitelboim (JT) (super)gravity.
\end{abstract}

\maketitle

{Keywords: enumerative geometry, moduli spaces, tau-functions, KdV hierarchy, Virasoro constraints, cut-and-join operator}\\

\def\thefootnote{\arabic{footnote}}

\section{Introduction}
\addcontentsline{toc}{section}{Introduction}
\setcounter{equation}{0}

Let $\overline{\mathcal M}_{g,n}$ be the Deligne--Mumford compactification of the moduli space of stable complex curves of genus $g$ with $n$ distinct marked points. The moduli space $\overline{\mathcal M}_{g,n}$ is defined to be empty unless the stability condition
\be
2g - 2 + n > 0
\ee
is satisfied. Let us associate with a marked point a line bundle ${\mathbb{L}}_i$ whose fiber at a moduli point $(\Sigma;z_1,\dots,z_n)$ is the complex cotangent line $T^*_{z_i}$ to $\Sigma$ at $z_i$.   Let $\psi_i\in H^2(\overline{\mathcal{M}}_{g,n},\mathbb{Q})$ denote the first Chern class of ${\mathbb{L}}_i$. With the forgetful map $\pi: \overline{\mathcal M}_{g,n+1} \rightarrow \overline{\mathcal M}_{g,n} $ we define the {\em Miller--Morita--Mumford tautological classes} \cite{Mumford}, $\kappa_k:= \pi_* \psi_{n+1}^{k+1} \in H^{2k}(\overline{\mathcal{M}}_{g,n},\mathbb{Q})$. Following Norbury \cite{Norb}, we introduce $\Theta$ classes, $\Theta_{g,n}\in H^{4g-4+2n}(\overline{\mathcal{M}}_{g,n})$, which are also related to the super Riemann surfaces \cite{NorbS}. We refer the reader to \cite{Norb,NP1,NorbS} for a detailed description of $\Theta$ classes. For $\alpha\in \{0,1\}$  we consider the intersection numbers
\be\label{gin}
\<\kappa_{b_1}\kappa_{b_2}\cdots \kappa_{b_m}\tau_{a_1}\tau_{a_2}\cdots\tau_{a_n} \>_g^{(\alpha)}:= \int_{\overline{\mathcal{M}}_{g,n}} \Theta_{g,n}^{1-\alpha}\kappa_{b_1}\kappa_{b_2}\dots \kappa_{b_m}\psi_1^{a_1} \psi_2^{a_2}\dots \psi_n^{a_n} \in {\mathbb Q}.
\ee
We do not consider $\kappa_0$ class in this paper, therefore below we assume $b_j>0$. We call such intersection numbers the {\em higher Weil--Petersson volumes}.
These intersection numbers vanish, unless the corresponding complex dimensions coincide
\be\label{dimc}
\sum_{i=1}^n a_i + \sum_{j=1}^{m}b_j+(1-\alpha)(2g-2+n)=\dim_{\mathbb C}\,\overline{\mathcal M}_{g,n},
\ee
where $\dim_{\mathbb C}\,\overline{\mathcal M}_{g,n}=3g-3+n$. Because of the dimensional constraints $\Theta_{g,n}^2=0$, so \eqref{gin} are the most general intersection numbers of $\psi$, $\kappa$, and $\Theta$ classes.  In this paper we derive an explicit formula for the generating functions of all these intersections numbers for $\alpha \in \{0,1\}$. This formula allows us to compute all higher Weil--Petersson volumes recursively.

Let $T_k$, $k\geq 0$ be formal variables. Consider the generating functions of the intersection numbers \eqref{gin} without $\kappa$ classes,
\be\label{taua}
\tau_{\alpha}=\exp\left(\sum_{g=0}^\infty \sum_{n=1}^\infty \hbar^{2g-2+n}F_{g,n}^\alpha\right),
\ee
where
\be
F_{g,n}^\alpha=\sum_{a_1,\ldots,a_n\geq 0}\<\tau_{a_1}\tau_{a_2}\cdots\tau_{a_n}\>_g^{(\alpha)}\frac{\prod T_{a_i}}{n!}.
\ee
On substitution $T_k=(2k+1)!!\, t_{2k+1}$ these generating functions yield the profound Kontsevich--Witten tau-function  \cite{Kon,Wit} (for $\alpha=1$) and the 
Br\'ezin--Gross--Witten tau-function  \cite{Norb} (for $\alpha=0$).  These tau-functions are solutions of the KdV hierarchy. We will apply the methods of integrable hierarchies below, hence we work with the variables $t_{k}$, which are natural for these hierarchies.

Let us consider operators 
\begin{equation}\label{CAJ}
\begin{split}
\widehat{W}_0&=\sum_{k,m\in {\mathbb Z}_{\odd}^+} \left(kmt_{k}t_{m}\frac{\p}{\p t_{k+m-1}}+\frac{1}{2}(k+m+1)t_{k+m+1}\frac{\p^2}{\p t_k \p t_m}\right)+\frac{t_1}{8},\\
\widehat{W}_1&=\frac{1}{3}\sum_{k,m\in {\mathbb Z}_{\odd}^+} \left(kmt_{k}t_{m}\frac{\p}{\p t_{k+m-3}}+\frac{1}{2}(k+m+3)t_{k+m+3}\frac{\p^2}{\p t_k \p t_m}\right)+\frac{t_1^3}{3!}+\frac{t_3}{8}.
\end{split}
\end{equation}
From the Virasoro constraints, satisfied by $\tau_{\alpha}$, it follows 
\begin{theorem}[\cite{KSS,KS2}]\label{TW}
\be\label{Wa}
\tau_{\alpha}=\exp\left({\hbar \widehat{W}_{\alpha}}\right)\cdot 1.
\ee
\end{theorem}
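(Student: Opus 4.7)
The plan is to derive \eqref{Wa} by first reformulating it as a first-order evolution equation via a grading argument, then showing that this evolution equation follows from the Virasoro and higher $W$-constraints satisfied by $\tau_\alpha$, and finally integrating by exponentiation.

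\emph{Grading.} The dimension constraint \eqref{dimc}, together with the substitution $T_k=(2k+1)!!\,t_{2k+1}$, shows that each $F_{g,n}^\alpha$ is a homogeneous polynomial in $t_k$ of weight $(2\alpha+1)(2g-2+n)$, where $\deg t_k = k$. Indeed, the $t$-degree of a monomial $\prod T_{a_i}$ is $\sum(2a_i+1)=2\sum a_i + n$, and \eqref{dimc} with $\dim_{\mathbb C}\overline{\mathcal M}_{g,n}=3g-3+n$ gives $\sum a_i=(3g-3+n)-(1-\alpha)(2g-2+n)$, yielding the stated weight. Introducing the grading operator
\[
\widehat{G}_\alpha := \frac{1}{2\alpha+1}\sum_{k\in\mathbb{Z}_{\odd}^+} k\,t_k\,\frac{\partial}{\partial t_k},
\]
one has $(\widehat{G}_\alpha-\hbar\,\partial_\hbar)\tau_\alpha=0$, so the identity \eqref{Wa} is equivalent to the evolution equation $\widehat{G}_\alpha\,\tau_\alpha=\hbar\,\widehat{W}_\alpha\,\tau_\alpha$.

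\emph{Reduction to the constraints.} The core step is to exhibit $\hbar\widehat{W}_\alpha-\widehat{G}_\alpha$ as an explicit combination of operators that annihilate $\tau_\alpha$. Because $\widehat{W}_\alpha$ contains cubic $t_kt_m\,\partial_{t_\bullet}$ pieces, this requires cubic constraints: either the zero mode of the $W^{(3)}$-current acting on $\tau_\alpha$ (which holds as a consequence of the KdV integrability of $\tau_\alpha$), or equivalently a combination of the form $\sum_{p,m} c_{p,m}\,t_p\,\widehat{L}_m^{(\alpha)}+\sum_m d_m\,\widehat{L}_m^{(\alpha)}$ (which annihilates $\tau_\alpha$ trivially from the Virasoro constraints). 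In either formulation, one matches the $\partial^2$, $tt\partial$, $t\partial$, $t^3$, $t$, and constant contributions to pin down the coefficients uniquely, and then verifies the resulting operator identity by a direct computation involving double-factorial sums. This coefficient-matching, though elementary, is the main obstacle, and it is the step where the specific normalizations $1/3$, $1/8$, $1/6$ in \eqref{CAJ} become forced.

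\emph{Exponentiation.} Once the evolution equation is established, decompose $\tau_\alpha=\sum_{n\geq 0}\hbar^n\,Z_n^{(\alpha)}$, where $Z_n^{(\alpha)}$ is the part of pure $t$-weight $(2\alpha+1)n$, so that $\widehat{G}_\alpha Z_n^{(\alpha)}=n\,Z_n^{(\alpha)}$. The evolution equation then reduces to the two-term recursion $n\,Z_n^{(\alpha)}=\widehat{W}_\alpha Z_{n-1}^{(\alpha)}$ with initial condition $Z_0^{(\alpha)}=1$, whose unique solution is $Z_n^{(\alpha)}=\widehat{W}_\alpha^n\cdot 1/n!$. Assembling, $\tau_\alpha=\sum_{n\geq 0}\frac{\hbar^n}{n!}\widehat{W}_\alpha^n\cdot 1=\exp(\hbar\widehat{W}_\alpha)\cdot 1$, as claimed.
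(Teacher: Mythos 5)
Your proposal is correct and follows essentially the same route as the paper's source for this statement (the paper itself only cites \cite{KSS,KS2} and notes that \eqref{Wa} follows from the Virasoro constraints): the dilaton-type grading reduces \eqref{Wa} to the evolution equation $\widehat{G}_\alpha\tau_\alpha=\hbar\widehat{W}_\alpha\tau_\alpha$, this equation is a $t$-weighted combination of the Virasoro constraints, and exponentiation in the graded decomposition finishes the argument. For the record, the ``main obstacle'' you defer is in fact immediate and requires neither $W^{(3)}$-constraints nor extra pure-Virasoro terms $d_m$: one checks directly that $\sum_{k\geq-\alpha}(2k+2\alpha+1)\,t_{2k+2\alpha+1}\,\widehat{L}^{\alpha}_{k}=\tfrac{2\alpha+1}{2}\bigl(\widehat{W}_\alpha-\hbar^{-1}\widehat{G}_\alpha\bigr)$ on functions of the odd times, so annihilation of $\tau_\alpha$ by the left-hand side is exactly the needed evolution equation.
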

We call such operators the {\em cut-and-join operators}, see \cite[Section 2]{H3} for more details.

 Let us consider the generating functions of the higher Weil--Petersson volumes \eqref{gin}
 \be
\tau_\alpha({\bf t}, {\bf s})=\left.\exp\left(\sum_{g=0}^\infty \sum_{n=0}^\infty \hbar^{2g-2+n}F_{g,n}^\alpha({\bf T}, {\bf s})\right)\right|_{T_k=(2k+1)!! t_{2k+1}},
\ee
where
 \be\label{FE}
F_{g,n}^\alpha({\bf T}, {\bf s})=\sum_{a_1,\ldots,a_n\ge 0}\sum_{b_1,\ldots,b_m>  0}\<\kappa_{b_1}\kappa_{b_2}\cdots \kappa_{b_m}\tau_{a_1}\tau_{a_2}\cdots\tau_{a_n} \>_g^{(\alpha)} \frac{\prod T_{a_i}}{n!}  \frac{\prod s_{b_i}}{m!}.
\ee
These generating functions contain all possible intersection numbers of $\psi_k$, $\Theta_{g,n}$, and $\kappa_b$ classes for $b>0$.

According to Manin and Zograf \cite{MZ}, insertion of $\kappa$ classes into the intersection numbers of $\psi$ classes can be described by certain translation of the variables ${\bf t}$ in the tau-function $\tau_1({\bf t})$. For the $\Theta$-case an analogous statement was recently obtained by Norbury \cite{NorbS}.
To describe these we introduce the polynomials $q_j({\bf s})$, defined by the generating function
\be\label{genss}
1-\exp\left(-\sum_{j=1}^\infty s_j z^j\right)=\sum_{j=1}^\infty q_j({\bf s})z^j,
\ee
and we put $q_k({\bf s})=0$ for $k<1$.
For $k>0$ they are nothing but the negative of the {\em elementary Schur functions} 
\be
q_j({\bf s})=-p_j(-{\bf s}),
\ee
where
\be
\exp\left(\sum_{j=1}^\infty s_j z^j\right)=:\sum_{j=0}^\infty p_j({\bf s})z^j.
\ee
Then the generating functions $\tau_\alpha({\bf t}, {\bf s})$ can be obtained from the generating functions $\tau_\alpha({\bf t})$ without $\kappa$-classes by translation
\begin{theorem}[\cite{MZ,NorbS}]\label{TMZN}
For $\alpha\in\{0,1\}$,
\be\label{tautr}
\tau_\alpha({\bf t}, {\bf s})=\tau_\alpha\left(\left\{ t_{2k+1}+\frac{1}{\hbar}\frac{q_{k-\alpha}({\bf s})}{(2k+1)!!}\right\}\right). 
\ee
\end{theorem}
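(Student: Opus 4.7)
The plan is to convert $\kappa$-class insertions into extra $\tau$-class insertions on moduli spaces with more marked points, via the Arbarello--Cornalba identity and its iteration of Kaufmann--Manin--Zagier \cite{KMZ,MZ}, and then recognise that these extra $\tau$-insertions amount to a shift of the $t$-variables in $\tau_\alpha$. The geometric inputs are: the Arbarello--Cornalba identity $\kappa_b=\pi_{*}\psi_{n+1}^{b+1}$ for the forgetful map $\pi\colon\overline{\mathcal{M}}_{g,n+1}\to\overline{\mathcal{M}}_{g,n}$; the comparison $\pi^{*}\psi_i=\psi_i-D_{i,n+1}$ together with the vanishing of $\psi_{n+1}$ on the boundary divisor $D_{i,n+1}$, which lets one pull polynomials in $\psi_1,\dots,\psi_n$ freely through $\pi$ provided they are multiplied by a positive power of $\psi_{n+1}$; and, for $\alpha=0$, Norbury's multiplicative formula $\Theta_{g,n+1}=\psi_{n+1}\cdot\pi^{*}\Theta_{g,n}$ of \cite{NorbS}. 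Together these give, for any monomial $P$ in $\psi_1,\dots,\psi_n$,
\begin{equation*}
\int_{\overline{\mathcal{M}}_{g,n}}\Theta_{g,n}^{1-\alpha}\,P\cdot\kappa_b \;=\; \int_{\overline{\mathcal{M}}_{g,n+1}}\Theta_{g,n+1}^{1-\alpha}\,P\cdot\psi_{n+1}^{b+\alpha},
\end{equation*}
the power $b+\alpha$ arising because for $\alpha=0$ Norbury's formula absorbs one factor of $\psi_{n+1}$ from $\pi^{*}\Theta_{g,n}$ into $\Theta_{g,n+1}$. A single $\kappa_b$ thus converts to a $\tau_{b+\alpha}$-insertion on the enlarged moduli.

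For products $\prod_{j}\kappa_{b_j}$ one iterates, forgetting one marked point per factor. The pushforward $(\pi_m)_{*}\prod_j\psi_{n+j}^{b_j+\alpha}$ differs from $\prod_j\kappa_{b_j}$ by strata where new marked points collide, and the difference is a combinatorial sum over set partitions, as observed by Kaufmann--Manin--Zagier \cite{KMZ}. Passing to the exponential generating series in ${\bf s}$ inverts this combinatorial sum, and the inversion is precisely the definition \eqref{genss} of the polynomials $q_j({\bf s})$: the insertion $\exp\bigl(\sum_{b\ge1}s_b\kappa_b\bigr)$ into a correlator becomes equivalent to inserting the weighted generating series of extra $\tau_{k}$-factors with coefficients $q_{k-\alpha}({\bf s})$, where the shift $k\mapsto k-\alpha$ tracks the $\psi$-power $b+\alpha$ in the display above.

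Finally I would translate the $\tau$-insertion picture into a shift of $t$-variables. With $T_k=(2k+1)!!\,t_{2k+1}$ and the $\hbar$-grading $2g-2+n$ in \eqref{taua}, inserting one extra $\tau_k$ with coefficient $c_k$ is equivalent to the substitution $T_k\mapsto T_k+c_k/\hbar$; the factor $1/\hbar$ absorbs the extra marked point in the $\hbar^{2g-2+n}$ counting. Equivalently, $t_{2k+1}\mapsto t_{2k+1}+c_k/\bigl(\hbar(2k+1)!!\bigr)$, and setting $c_k=q_{k-\alpha}({\bf s})$ recovers \eqref{tautr}. The main technical obstacle is the Kaufmann--Manin--Zagier combinatorial inversion together with its generating-series reformulation via the elementary Schur polynomials; for $\alpha=1$ this is contained in \cite{KMZ,MZ}, and the $\alpha=0$ case of \cite{NorbS} additionally requires verifying that Norbury's multiplicative relation for $\Theta$ is compatible with iterated forgetful maps, so that the same inversion applies with the shifted $\psi$-power.
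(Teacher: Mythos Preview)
The paper does not give its own proof of Theorem~\ref{TMZN}; it is quoted as a known result from \cite{MZ} (for $\alpha=1$) and \cite{NorbS} (for $\alpha=0$) and is used as input for the rest of the paper. Your sketch is a faithful outline of the arguments in those references: the single-$\kappa$ identity via $\kappa_b=\pi_*\psi_{n+1}^{b+1}$ together with $\psi_{n+1}\cdot D_{i,n+1}=0$ (and, for $\alpha=0$, Norbury's relation $\Theta_{g,n+1}=\psi_{n+1}\,\pi^*\Theta_{g,n}$) indeed yields $\langle\kappa_b\prod\tau_{a_i}\rangle^{(\alpha)}_g=\langle\tau_{b+\alpha}\prod\tau_{a_i}\rangle^{(\alpha)}_g$, and the iteration over several $\kappa$-factors is handled by the Kaufmann--Manin--Zagier partition combinatorics whose exponential generating-series inversion is precisely the definition~\eqref{genss} of $q_j({\bf s})$; the resulting extra $\tau_k$-insertions, weighted by $q_{k-\alpha}({\bf s})$ and contributing an additional marked point each, translate into the $t$-shift in~\eqref{tautr} with the $\hbar^{-1}$ accounting for the shift in $2g-2+n$. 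So there is nothing to compare against in the present paper, and your proposal correctly reproduces the cited proofs.
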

Note that only the variables $t_{2k+1}$ for $k>\alpha$ are translated. This theorem, in particular, shows that $\tau_\alpha({\bf t}, {\bf s})$ are KdV tau-functions in the variables ${\bf t}$ for the arbitrary values of parameters ${\bf s}$.

From this theorem and cut-and-join formula \eqref{Wa}
in this paper we derive the cut-and-join description of the tau-functions $\tau_\alpha({\bf t}, {\bf s})$. Let 
\be\label{fa}
f_\alpha(z)=\left(\frac{2\alpha+1}{2{\bf i} \sqrt{2\pi}}\int_\gamma\frac{{\hbox{d}} t}{t^{\alpha+\frac{3}{2}}}\exp\left(\frac{z^2 t}{2}-\sum_{j=1}^\infty s_j t^{-j}\right)\right)^\frac{1}{2\alpha+1},
\ee
where $\gamma$ is Hankel's loop, which runs from $-\infty$ around the origin counter clockwise and returning to $-\infty$, and ${\bf i}=\sqrt{-1}$, be the formal series, $f_\alpha(z)\in z+z{\mathbb Q}[{\bf s}][\![z^2]\!]$. 
We also introduce the coefficients
\be\label{sigmar}
\rho_\alpha[k,m]= [z^m] f_\alpha(z)^k,
\ee
and the operators
\begin{equation}\label{rrr}
 \begin{aligned}
 \widehat{J}_{k}^{\alpha}&= \sum_{m=k}^\infty \rho_\alpha[k,m] \widehat{J}_{m},                                                     & &k\in {\mathbb Z},\\
\end{aligned}
\end{equation}
where
\be
\widehat{J}_k =
\begin{cases}
\displaystyle{\frac{\p}{\p t_k} \,\,\,\,\,\,\,\,\,\,\,\, \mathrm{for} \quad k>0},\\[2pt]
\displaystyle{0}\,\,\,\,\,\,\,\,\,\,\,\,\,\,\,\,\,\,\, \mathrm{for} \quad k=0,\\[2pt]
\displaystyle{-kt_{-k} \,\,\,\,\,\mathrm{for} \quad k<0.}
\end{cases}
\ee
Consider the operators
\begin{equation}
\begin{split}\label{Wpri}
{\widehat{W}_0}({\bf s})&=\sum_{k,m\in {\mathbb Z}_{\odd}^+}^\infty\left( {\widehat {J}}_{-k}^0  {\widehat {J}}_{-m}^0  {\widehat {J}}_{k+m-1}^0 +\frac{1}{2} {\widehat {J}}^0_{-k-m-1} {\widehat {J}}_{k}^0  {\widehat {J}}_{m}^0\right)+\frac{{\widehat {J}}_{-1}^0}{8},\\
{\widehat{W}_1}({\bf s})&=\frac{1}{3}\sum_{k,m\in {\mathbb Z}_{\odd}^+}^\infty \left(  {\widehat {J}}_{-k}^1  {\widehat {J}}_{-m}^1   {\widehat {J}}_{k+m-3}^1
+\frac{1}{2} {\widehat {J}}_{-k-m-3}^1 {\widehat {J}}_{k}^1 {\widehat {J}}_{m}^1\right)+\frac{ ({\widehat {J}}_{-1}^1)^3}{3!}+\frac{ {\widehat {J}}_{-3}^1}{24}.
\end{split}
\end{equation}
These operators are cubic in ${\widehat{J}}_k$.
The main result of this paper is the following 
\begin{theorem}\label{MT}
\be\label{taum}
\tau_\alpha({\bf t}, {\bf s})=\exp\left(\hbar {\widehat{W}_\alpha}({\bf s})\right)\cdot 1.
\ee
\end{theorem}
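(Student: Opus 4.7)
The plan is to combine Theorems~\ref{TW} and \ref{TMZN} and realize $\widehat{W}_\alpha({\bf s})$ as a Fock-space conjugate of $\widehat{W}_\alpha$. I would first introduce the translation operator
\[
\widehat{T}_\alpha({\bf s}):=\exp\Bigl(\sum_{k>\alpha}\frac{q_{k-\alpha}({\bf s})}{\hbar(2k+1)!!}\widehat{J}_{2k+1}\Bigr),
\]
which implements the shift of Theorem~\ref{TMZN}, so that $\tau_\alpha({\bf t},{\bf s})=\widehat{T}_\alpha({\bf s})\tau_\alpha({\bf t})$. Since $\widehat{T}_\alpha({\bf s})$ consists of positive Heisenberg modes only, $\widehat{T}_\alpha({\bf s})\cdot 1=1$; combining with Theorem~\ref{TW} then gives
\[
\tau_\alpha({\bf t},{\bf s})=\exp\bigl(\hbar\,\widehat{T}_\alpha({\bf s})\widehat{W}_\alpha\widehat{T}_\alpha({\bf s})^{-1}\bigr)\cdot 1.
\]

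Next I would interpret the coefficients $\rho_\alpha[k,m]$ as the change of Heisenberg basis induced by the formal coordinate change $w=f_\alpha(z)$ on the loop: writing $J(z)=\sum_k\widehat{J}_kz^{-k-1}$, one has $\widehat{J}_k^\alpha=\operatorname{Res}_zf_\alpha(z)^k\,J(z)\,dz$. A short residue computation verifies $[\widehat{J}_k^\alpha,\widehat{J}_m^\alpha]=k\delta_{k+m,0}$, and $\widehat{J}_k^\alpha\cdot 1=0$ for $k>0$ because $f_\alpha(z)\in z+z\mathbb{Q}[{\bf s}][\![z^2]\!]$. Hence there is an operator $\widehat{G}_\alpha({\bf s})$ in the Fock-space symmetry group realizing this change of basis by conjugation with $\widehat{G}_\alpha({\bf s})\cdot 1=1$; since the cubic form of $\widehat{W}_\alpha({\bf s})$ in the $\widehat{J}^\alpha$'s exactly mirrors that of $\widehat{W}_\alpha$ in the $\widehat{J}$'s, one obtains $\widehat{G}_\alpha({\bf s})\widehat{W}_\alpha\widehat{G}_\alpha({\bf s})^{-1}=\widehat{W}_\alpha({\bf s})$ and so
\[
\exp(\hbar\widehat{W}_\alpha({\bf s}))\cdot 1=\widehat{G}_\alpha({\bf s})\tau_\alpha({\bf t}).
\]
The theorem therefore reduces to the identity $\widehat{G}_\alpha({\bf s})\tau_\alpha({\bf t})=\widehat{T}_\alpha({\bf s})\tau_\alpha({\bf t})$.

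To verify this last identity I would unwind the Hankel integral defining $f_\alpha$: a direct contour computation yields
\[
f_\alpha(z)^{2\alpha+1}=\frac{z^{2\alpha+1}}{(2\alpha-1)!!}-(2\alpha+1)\sum_{j\geq 1}\frac{q_j({\bf s})z^{2\alpha+1+2j}}{(2\alpha+2j+1)!!},
\]
which links the $\rho_\alpha[k,m]$ directly to the translation shifts $q_{k-\alpha}({\bf s})/(2k+1)!!$. The ``upper-triangular'' part of $\widehat{G}_\alpha({\bf s})$ (its action on positive Heisenberg modes) is thereby identified with $\widehat{T}_\alpha({\bf s})$, while the extra mixing produced by $\widehat{J}_{-k}\mapsto\widehat{J}_{-k}^\alpha$ should act trivially on $\tau_\alpha({\bf t})$ via the Virasoro constraints underlying Theorem~\ref{TW} (cf.\ \cite{KSS,KS2}).

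The main obstacle lies in this final matching: although $\widehat{T}_\alpha({\bf s})$ is built purely from positive modes, $\widehat{G}_\alpha({\bf s})$ genuinely mixes positive and negative Heisenberg modes through the expansion $\widehat{J}_{-k}^\alpha=\widehat{J}_{-k}+\sum_{j>0}\rho_\alpha[-k,-k+2j]\widehat{J}_{-k+2j}$, and showing that the extra contributions cancel on $\tau_\alpha({\bf t})$ exploits the specific Virasoro/Heisenberg structure of the Kontsevich--Witten and Br\'ezin--Gross--Witten tau-functions. A cleaner alternative that bypasses $\widehat{G}_\alpha({\bf s})$ is to prove the theorem directly by verifying the initial value problem $\partial_\hbar\tau=\widehat{W}_\alpha({\bf s})\tau$ with $\tau|_{\hbar=0}=1$: the initial condition is immediate from the stability bound $2g-2+n\geq 1$, and the $\hbar$-derivative identity reduces, via the chain rule applied to Theorems~\ref{TW}--\ref{TMZN}, to a single generating-function identity on the coefficients $\rho_\alpha[k,m]$ coming from the Hankel representation of $f_\alpha$.
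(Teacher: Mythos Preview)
Your overall architecture is the same as the paper's: conjugate $\widehat{W}_\alpha$ by an operator that simultaneously fixes the vacuum and sends $\tau_\alpha({\bf t})$ to $\tau_\alpha({\bf t},{\bf s})$. The difference is that you try to build this operator abstractly as some $\widehat{G}_\alpha({\bf s})$ implementing the Heisenberg change of basis $\widehat{J}_k\mapsto\widehat{J}_k^\alpha$, whereas the paper writes it down explicitly as the Virasoro group element $\widehat{V}_\alpha({\bf s})=\Xi^{-1}(f_\alpha)$ of Section~\ref{S1}. Once one knows that $\widehat{G}_\alpha({\bf s})=\exp\bigl(\sum_{k>0}a_{2k}\widehat{L}_{2k}\bigr)$, both facts you need become immediate: $\widehat{V}_\alpha({\bf s})\cdot 1=1$ is clear since each $\widehat{L}_{2k}$ with $k>0$ annihilates $1$, and $\widehat{V}_\alpha({\bf s})\,\widehat{J}_k\,\widehat{V}_\alpha({\bf s})^{-1}=\widehat{J}_k^\alpha$ is Lemma~\ref{lemma32}. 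Your existence argument for $\widehat{G}_\alpha$ (``there is an operator in the Fock-space symmetry group\dots'') and the vacuum-fixing claim are not justified without this identification.

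The substantive gap is the identity $\widehat{G}_\alpha({\bf s})\tau_\alpha=\widehat{T}_\alpha({\bf s})\tau_\alpha$. You correctly flag it as the crux, but the proposed mechanism is wrong: the ``upper-triangular part'' of $\widehat{G}_\alpha({\bf s})$ cannot be identified with $\widehat{T}_\alpha({\bf s})$, because $\widehat{T}_\alpha({\bf s})$ carries an explicit $\hbar^{-1}$ while $\widehat{G}_\alpha({\bf s})$ is $\hbar$-independent. What actually happens (Lemma~\ref{virtos}, proved in \cite{H3}) is that the Virasoro constraints~\eqref{VirC} let one trade each $\hbar^{-1}\partial/\partial t_{2k+1+2\alpha}$ acting on $\tau_\alpha$ for $\widehat{L}_{2k}$, so that the $\hbar$-dependent translation operator becomes an $\hbar$-independent Virasoro element precisely when the shift parameters are organized by $f_\alpha$ via~\eqref{vfunc}. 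Your alternative $\partial_\hbar$ route does not bypass this: differentiating the shifted argument produces $\hbar^{-2}$ terms whose cancellation against $\widehat{W}_\alpha(\tilde{\bf t})$ is again exactly the content of the Virasoro constraints, not a mere generating-function identity in the $\rho_\alpha[k,m]$.
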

This result might be a bit surprising. 
In general, there is no reason to expect that the generating functions $\tau_{\alpha}({\bf t},{\bf s})$ are given by the relations of the form \eqref{taum} with cubic operators ${\widehat{W}_\alpha}({\bf s})$. We believe that this is a general property of a big, still unspecified class of the cohomological field theories. Note that the operators ${\widehat{W}_\alpha}({\bf s})$ are independent of $\hbar$.

Let us consider the {\em topological expansion} of the tau-functions $\tau_{\alpha}({\bf t},{\bf s})$,
\be
\tau_{\alpha}({\bf t},{\bf s})=\sum_{p=0}^\infty \tau_{\alpha}^{(p)}({\bf t},{\bf s}) \hbar^p.
\ee
Coefficients $\tau_{\alpha}^{(p)}({\bf t},{\bf s})$ are homogeneous polynomials of degree $(2\alpha+1)p$ in ${\bf t}$ and ${\bf s}$ if we put $\deg t_k = k$ and $\deg s_k=2k$.
Then an equivalent formulation of Theorem \ref{MT} is given by the following {\em algebraic topological recursion}
\begin{corollary}
\be\label{pp}
\tau_{\alpha}^{(p)}({\bf t},{\bf s})=\frac{1}{p} {\widehat{W}_\alpha} ({\bf s}) \cdot \tau_{\alpha}^{(p-1)}({\bf t},{\bf s}).
\ee
\end{corollary}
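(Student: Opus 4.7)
The plan is to deduce the corollary directly from Theorem \ref{MT} by expanding the exponential of the cut-and-join operator as a formal power series in $\hbar$ and comparing with the topological expansion. Since $\widehat{W}_\alpha({\bf s})$ is independent of $\hbar$, the standard Taylor expansion yields
\begin{equation*}
\tau_\alpha({\bf t},{\bf s}) = \exp\bigl(\hbar\,\widehat{W}_\alpha({\bf s})\bigr)\cdot 1 = \sum_{p\geq 0}\frac{\hbar^p}{p!}\,\widehat{W}_\alpha({\bf s})^p\cdot 1.
\end{equation*}
Matching this with $\tau_\alpha({\bf t},{\bf s})=\sum_{p\geq 0}\tau_\alpha^{(p)}({\bf t},{\bf s})\,\hbar^p$ gives the closed form $\tau_\alpha^{(p)}=\frac{1}{p!}\widehat{W}_\alpha^{\,p}\cdot 1$. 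The recursion \eqref{pp} is then immediate: apply $\widehat{W}_\alpha$ to $\tau_\alpha^{(p-1)}=\tfrac{1}{(p-1)!}\widehat{W}_\alpha^{\,p-1}\cdot 1$ and absorb the factor of $p$.

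Before declaring the proof complete, I would verify two mild consistency points. First, the zeroth term of the topological expansion satisfies $\tau_\alpha^{(0)}=1$: this follows since $F_{g,n}^\alpha$ is defined only in the stable range $2g-2+n>0$, so the lowest power of $\hbar$ occurring in $\tau_\alpha$ is $\hbar^1$ (coming from $(g,n)=(0,3)$ for $\alpha=1$, resp.\ the relevant lowest $\Theta$-stratum for $\alpha=0$). Second, the operator $\widehat{W}_\alpha({\bf s})$ shifts the total degree (with $\deg t_k=k$, $\deg s_k=2k$) by exactly $2\alpha+1$, which matches the stated homogeneity of $\tau_\alpha^{(p)}$: this homogeneity is a direct consequence of the dimension constraint \eqref{dimc}, which, combined with $p=2g-2+n$, gives total weight $(2\alpha+1)p$ for every monomial contributing to $\tau_\alpha^{(p)}$, and is automatically preserved by the recursion.

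There is essentially no obstacle; the only nontrivial input is Theorem \ref{MT} itself, which has already been established. The corollary is just an equivalent reformulation of the cut-and-join equation as a level-by-level recursion, and the bulk of the writing consists in fixing notation and the base case. I would present the proof as three short lines (expansion, comparison, division by $p$) followed by a one-sentence remark on the starting value $\tau_\alpha^{(0)}=1$.
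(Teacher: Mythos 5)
Your proposal is correct and coincides with the paper's argument: the paper simply notes that the recursion \eqref{pp} follows immediately from the $\hbar$-expansion of \eqref{taum}, which is exactly your expansion $\tau_\alpha^{(p)}=\frac{1}{p!}\widehat{W}_\alpha({\bf s})^p\cdot 1$ followed by division by $p$. The extra consistency checks you mention (the base case $\tau_\alpha^{(0)}=1$ and the homogeneity) are fine but not needed beyond what the paper already records.
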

This recursion allows us to find recursively all higher Weil--Petersson volumes \eqref{gin} with the initial condition $\tau_{\alpha}^{(0)}({\bf t},{\bf s})=1$.

There are several procedures, which allow to compute the higher Weil--Petersson volumes \eqref{gin} recursively. The Chekhov--Eynard--Orantin topological recursion \cite{EWP}  allows us to find recursively all the intersection numbers \eqref{gin} for $\alpha=1$. It is equivalent to the recursion of  Liu--Xu \cite{LiuXu}. 
Mirzakhani \cite{Mirz1,Mirz2} derived a recursion for the Weil--Petersson volumes of the moduli spaces of hyperbolic Riemann surfaces with geodesic boundaries associated with the case $\alpha=1$ and $s_k=\delta_{k,1}s$. For the super hyperbolic Riemann surfaces with geodesic boundaries, corresponding to the case $\alpha=0$ and $s_k=\delta_{k,1}s$, an analog of Mirzakhani's recursion was recently derived by Stanford and Witten \cite{SW}. The Chekhov--Eynard--Orantin topological recursion for this case is proven by Norbury \cite{NorbS}. We believe that  for the proper specifications of parameters all these recursions should follow from the algebraic topological recursion \eqref{pp}.

In physics, specifications of the intersection numbers of the form  \eqref{gin} are related to the versions of topological gravity known as JT gravity (for $\alpha=1$) and JT supergravity (for $\alpha=0$) \cite{JTMM,OS}.
Physical interpretation of the cut-and-join description \eqref{taum} is not clear yet.

The present paper is organized as follows. In Section \ref{S1} we describe how the insertion of the $\kappa$ classes can be described by the Virasoro group elements. Section \ref{S2} is devoted to construction of the cut-and-join operators. In Appendix \ref{AAA} we present the first few coefficients of the cut-and-join operators ${\widehat{W}_\alpha}({\bf s})$. 

\subsection*{Acknowledgments}

This work was supported by the Institute for Basic Science (IBS-R003-D1).


\section{Virasoro group operator}\label{S1}
Consider the space ${\mathbb C}[\![{\bf t}]\!]$ of formal power series of infinitely many variables ${\bf t}=\{ t_1, t_2,\dots\}$. We denote by $\widehat{\dots}$ the differential operators acting on this space. We introduce the {\em Heisenberg--Virasoro algebra} ${\mathcal L}$. It is generated by the operators
\be
\widehat{J}_k =
\begin{cases}
\displaystyle{\frac{\p}{\p t_k} \,\,\,\,\,\,\,\,\,\,\,\, \mathrm{for} \quad k>0},\\[2pt]
\displaystyle{0}\,\,\,\,\,\,\,\,\,\,\,\,\,\,\,\,\,\,\, \mathrm{for} \quad k=0,\\[2pt]
\displaystyle{-kt_{-k} \,\,\,\,\,\mathrm{for} \quad k<0,}
\end{cases}
\ee
the unit, and the Virasoro operators
\be
\label{virfull}
\widehat{L}_m=\frac{1}{2} \sum_{a+b=-m}a b t_a t_b+ \sum_{k=1}^\infty k t_k \frac{\p}{\p t_{k+m}}+\frac{1}{2} \sum_{a+b=m} \frac{\p^2}{\p t_a \p t_b}.
\ee
These operators satisfy the commutation relations
\begin{align}\label{comre}
\left[\widehat{J}_k,\widehat{J}_m\right]&=k \delta_{k,-m},\nn\\
\left[\widehat{L}_k,\widehat{J}_m\right]&=-m \widehat{J}_{k+m},\\
\left[\widehat{L}_k,\widehat{L}_m\right]&=(k-m)\widehat{L}_{k+m}+\frac{1}{12}\delta_{k,-m}(k^3-k).\nn
\end{align}

This algebra is  a subalgebra of $\gl(\infty)$, acting on the space of solutions of the KP integrable hierarchy.
Let us describe the relation between the action of the translation and Virasoro group operators on the tau-functions $\tau_\alpha$ given by \eqref{taua}.
For a Virasoro group element of the form
\be\label{Virg}
\widehat{V}_{\bf a}=\exp\left({\sum_{k>0} a_k \widehat{L}_k }\right)
\ee
consider the series
\be
\Xi(\widehat{V}_{\bf a})=e^{\sum_{k>0} a_k {\mathtt l}_k } \,z \, e^{-\sum_{k>0} a_k {\mathtt l}_k } \in z+z{\mathbb C}[\![z]\!],
\ee
where
\be\label{virw}
{\mathtt l}_m=-z^m\left(z\frac{\p}{\p z}+\frac{m+1}{2}\right).
\ee
For a given set of parameters $a_k\in {\mathbb C}$, $k\in {\mathbb Z}_{>0}$ we denote the  series  associated with the operator ${\widehat{V}}_{\bf a}$ by $f(z;{\bf a})=\Xi(\widehat{V}_{\bf a})$.
We also introduce the inverse formal series $h(z;{\bf a})$,
\be\label{hfun}
f(h(z;{\bf a});{\bf a})=z.
\ee

For any series $f(z)\in z+z{\mathbb C}[\![z]\!]$ we construct a corresponding element of the Virasoro group 
\be\label{V}
\widehat{V}=\Xi^{-1}(f(z)),
\ee
where the coefficients $a_k$ are defined implicitly by
\be
f(z)=e^{\sum_{k>0} a_k {\mathtt l}_k } \,z \, e^{-\sum_{k>0} a_k {\mathtt l}_k } .
\ee

Tau-functions $\tau_\alpha$ given by \eqref{taua} satisfy the Virasoro constraints
\begin{equation}
\begin{split}\label{VirC}
\widehat{L}_k^{\alpha} \cdot {\tau}_{\alpha} &=0, \,\,\,\, k \geq -\alpha, \\
\end{split}
\end{equation}
where the Virasoro operators are given by
\begin{equation}
\begin{split}
\widehat{L}_k^{\alpha}&=\frac{1}{2}\widehat{L}_{2k}-\frac{1}{2\hbar}\frac{\p}{\p t_{2k+1+2\alpha}} +\frac{\delta_{k,0}}{16} \in {\mathcal L}.
\end{split}
\end{equation} 
These operators satisfy the commutation relations of the Virasoro algebra,
\be
\left[\widehat{L}_k^{\alpha},\widehat{L}_k^{\alpha}\right]=(k-m)\widehat{L}_{k+m}^{\alpha}, \quad \quad k,m \geq -\alpha.
\ee

Using these Virasoro constraints for tau-functions $\tau_\alpha$ we can rewrite the action of the translation operators in terms of the Virasoro group elements. Let the functions $v^\alpha(z;{\bf a})$ and $f(z;{\bf a})$ be related by 
\be\label{vfunc}
{v}^\alpha(z;{\bf a})=\frac{1}{(2\alpha+1)}  (z^{2\alpha+1}-f(z;{\bf a})^{2\alpha+1}) \in z^{2+2\alpha}{\mathbb C}[\![z]\!],
\ee
where  $v^\alpha(z;{\bf a})$ and $f(z;{\bf a})$ are odd functions of $z$.
Consider the coefficients
\be
v_k^{\alpha}= [z^k] {v}^\alpha(z;{\bf a}).
\ee

\begin{lemma}[\cite{H3}]\label{virtos}
We have
\be
\exp\left({\hbar^{-1} \sum_{k=\alpha+1} v_{2k+1}^{\alpha}\frac{\p}{\p t_{2k+1}}}\right) \cdot \tau_\alpha=\exp\left({\sum_{k>0} a_{2k} \widehat{L}_{2k} }\right) \cdot \tau_\alpha.
\ee
\end{lemma}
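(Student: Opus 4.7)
The strategy is to verify the identity by showing that both sides, considered as functions of a scaling parameter $s$ (with ${\bf a}$ replaced by $s{\bf a}$), satisfy the same first-order ODE in $s$ with the common initial value $\tau_\alpha$ at $s=0$. Set $\widehat V(s)=\exp\bigl(s\sum_{k>0}a_{2k}\widehat L_{2k}\bigr)$, $T(s)=\exp\bigl(\hbar^{-1}\sum_{k\geq\alpha+1}v^\alpha_{2k+1}(s{\bf a})\,\partial_{t_{2k+1}}\bigr)$, and $u(s)=\widehat V(s)\tau_\alpha$, so the claim becomes $u(s)=T(s)\tau_\alpha$. Since the translations comprising $T(s)$ commute with one another, differentiating the ansatz gives $\tfrac{d}{ds}[T(s)\tau_\alpha]=\hbar^{-1}\sum \dot v^\alpha_{2k+1}(s{\bf a})\partial_{t_{2k+1}}\cdot T(s)\tau_\alpha$, while $\dot u(s)=\bigl(\sum a_{2k}\widehat L_{2k}\bigr)u(s)$. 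The task thus reduces to showing that, on $u(s)$, the operator $\sum a_{2k}\widehat L_{2k}$ acts as the pure translation $\hbar^{-1}\sum \dot v^\alpha_{2k+1}(s{\bf a})\partial_{t_{2k+1}}$.

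I would achieve this by transporting the Virasoro constraints along the flow: the operators $\widetilde L^\alpha_k(s):=\widehat V(s)\,\widehat L^\alpha_k\,\widehat V(s)^{-1}$ annihilate $u(s)$ for all $k\geq -\alpha$. Under the faithful representation $l_m=-z^m(z\partial_z+(m+1)/2)$, the adjoint action of $\widehat V(s)$ on the Virasoro algebra realizes the reparametrization $z\mapsto f(z;s{\bf a})$ of the formal disk, and $\widehat V(s)\widehat L_{2k}\widehat V(s)^{-1}$ is an explicit linear combination of the $\widehat L_{m}$'s whose coefficients are read off from the expansion of $f(z;s{\bf a})^{2k+1}$ and its $z$-derivatives. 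Combining this with the relation $\widehat L^\alpha_k=\tfrac12\widehat L_{2k}-\tfrac{1}{2\hbar}\partial_{t_{2k+1+2\alpha}}+\tfrac{\delta_{k,0}}{16}$, one can solve for the translation component of $\sum a_{2k}\widehat L_{2k}$ modulo $\widetilde L^\alpha$-annihilators of $u(s)$.

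The resulting translation coefficients match $\dot v^\alpha_{2k+1}(s{\bf a}) = -\tfrac{1}{2\alpha+1}\tfrac{d}{ds}[z^{2k+1}]f(z;s{\bf a})^{2\alpha+1}$ because the Virasoro constraint pairs $\widehat L_{2k}$ with the single translation $\hbar^{-1}\partial_{t_{2k+1+2\alpha}}$, and the exponent $2\alpha+1$ is precisely the power of $z$ appearing in $v^\alpha(z;{\bf a})=\tfrac{1}{2\alpha+1}(z^{2\alpha+1}-f(z;{\bf a})^{2\alpha+1})$; the flow equation $\partial_s f = -\sum_{k>0}a_{2k}z^{2k+1}\partial_z f$ for the reparametrization then delivers the required match. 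The main obstacle is the bookkeeping in the middle step: expressing $\widehat V(s)\widehat L_{2k}\widehat V(s)^{-1}$ concretely and cleanly separating the $\widetilde L^\alpha$-annihilator from the pure translation summand, which depends on how the diffeomorphism $z\to f$ acts on half-densities under the identification $l_m\leftrightarrow -z^m(z\partial_z+(m+1)/2)$.
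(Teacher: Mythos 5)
This lemma is not proved in the paper --- it is imported from \cite{H3} --- so there is no internal proof to compare against; judged on its own, your argument is correct and is essentially the standard one: pass to the one--parameter family ${\bf a}\mapsto s{\bf a}$, show both sides solve the same linear ODE in $s$ with initial value $\tau_\alpha$, and use the Virasoro constraints to convert the Virasoro generator into a translation along the flow. One remark dissolves what you call the main obstacle: you never need the individual conjugations $\widehat{V}(s)\widehat{L}_{2k}\widehat{V}(s)^{-1}$, nor any half-density bookkeeping. Since $\widehat{\ell}:=\sum_{k>0}a_{2k}\widehat{L}_{2k}$ commutes with its own exponential, $\widehat{\ell}\,u(s)=\widehat{V}(s)\,\widehat{\ell}\,\tau_\alpha$; the constraints give $\widehat{L}_{2k}\tau_\alpha=\hbar^{-1}\partial_{t_{2k+1+2\alpha}}\tau_\alpha$ for $k>0$ (so the $\delta_{k,0}/16$ term never enters), and conjugating the resulting translations back via Lemma \ref{lemma32} yields $\dot u(s)=\hbar^{-1}\sum_n\bigl([z^n]\sum_{k>0}a_{2k}f(z;s{\bf a})^{2k+1+2\alpha}\bigr)\partial_{t_n}u(s)$, a pure translation generator because $f^m\in z^m(1+\cdots)$ keeps all indices positive. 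The identification with $\dot v^\alpha$ is then immediate from $\sum_{k>0}a_{2k}f^{2k+1+2\alpha}=-f^{2\alpha}\partial_s f=\partial_s v^\alpha(z;s{\bf a})$, using the flow equation in the form $\partial_s f=-\sum_{k>0}a_{2k}f^{2k+1}$ (equivalent to the form you wrote, since $\xi(f)=\xi(z)\partial_z f$ along a one-parameter flow). This shortcut removes the need to expand $\widehat{V}(s)\widehat{L}_{2k}\widehat{V}(s)^{-1}$ in the $\widehat{L}_m$ and to worry about central terms, which is the only place your outline risked getting bogged down.
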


Let us find the functions $f_\alpha( z)$ associated with the transitions in Theorem \ref{TMZN}. These translations are described by the generating functions
\be
{v}^\alpha(z)=\sum_{k=1}^\infty q_k({\bf s})\frac{z^{2k+2\alpha+1}}{(2k+2\alpha+1)!!}.
\ee
Functions ${v}^\alpha(z)$ satisfy
\be
\frac{1}{z}\frac{\p}{\p z} {v}^1(z) ={v}^0(z).
\ee
\begin{remark}
In the right hand side of \eqref{tautr}
the parameters ${\bf s}$ parametrize arbitrary translations of the variables $t_{2k+1}$ for $k>\alpha$.
\end{remark}
Then $f_\alpha( z)$ are the solutions of \eqref{vfunc},
\be\label{ftov}
f_\alpha(z)=\left(z^{2\alpha+1}+({2\alpha+1})v^\alpha(z)\right)^{\frac{1}{2\alpha+1}}.
\ee
Recall the integral representation for the reciprocal  gamma function
\be
\frac{1}{\Gamma(x)}=\frac{1}{2\pi {\bf i}} \int_{\gamma} e^t t^{-z}  {\hbox{d}} t,
\ee
where $\gamma$ is Hankel's loop and ${\bf i}=\sqrt{-1}$. Using the identities
\be
(2k+1)!!=2^{k+1}\frac{\Gamma(k+3/2)}{\Gamma(1/2)}
\ee
and $\Gamma(1/2)=\sqrt{\pi}$, one has
\begin{equation}
\begin{split}\label{vv}
{v}^\alpha(z)&=\frac{1}{2{\bf i} \sqrt{2\pi}}\int_\gamma\frac{{\hbox{d}} t}{t^{\alpha+\frac{3}{2}}}
e^\frac{z^2 t}{2}\left(1-\exp\left(-\sum_{j=1}^\infty s_j t^{-j}\right)\right)\\
&=\frac{z^{2\alpha+1}}{2\alpha+1}-\frac{1}{2{\bf i} \sqrt{2\pi}}\int_\gamma\frac{{\hbox{d}} t}{t^{\alpha+\frac{3}{2}}}\exp\left(\frac{z^2 t}{2}-\sum_{j=1}^\infty s_j t^{-j}\right),
\end{split}
\end{equation}
and from \eqref{ftov} we have  \eqref{fa}.
The first few terms of the series expansion of $f_\alpha(z)$ are given by
\begin{equation}
 \begin{aligned}
f_0(z)&=z-\frac{1}{3}\,s_{{1}}{z}^{3}-\frac{1}{15}\, \left( s_{{2}}-\frac{1}{2}\,{s_{{1}}}^{2}
 \right) {z}^{5}-\frac{1}{105}\left(s_
{3}+ \frac{1}{6}\,{s_{{1}}}^{3}-s_{{2}}s_{{1}}\right) {z}^{7}\\
&-\frac{1}{945} { \left(s_{{4}} -\frac{1}{24}\,{s_{{1}}}^{4}+\frac{1}{2}\,
s_{{2}}{s_{{1}}}^{2}-s_{{3}}s_{{1}}-\frac{1}{2}\,{s_{{2}}}^{2}
 \right) {z}^{9}}+\dots,\\
f_1(z)&=z-\frac{1}{15}s_1z^3-\frac{1}{105}\left(s_{{2}}-\frac{1}{30}\,{s_{{1}}}^{2}\right)z^5-\frac{1}{945}\left(s_{{3}}+\frac{1}{30}\,{s_{{1}}}^{3}+\frac{1}{5}\,s_{{2}}s_{{1}}\right)z^7\\
&-\frac{1}{10395}\left({s_{{4}}+{\frac {289}{12600}}\,{s_{{1}}}^{4}+\frac {61}{
210}}\,s_{{2}}{s_{{1}}}^{2}+{\frac {7}{15}}\,s_{{3}}s_{{1}}+{\frac {31}{70}}\,{s_{{2}}}^{2}\right)z^9+\dots,
\end{aligned}
\end{equation}
where by $\dots$ we denote the higher order terms. 

Let 
\be\label{Vopp}
{\widehat V}_{\alpha}({\bf s})=\Xi^{-1}(f_\alpha(z)).
\ee
Then from Lemma \ref{virtos} one has
\begin{proposition}\label{propV}
\be
\tau_\alpha({\bf t}, {\bf s})={\widehat V}_{\alpha}({\bf s}) \cdot \tau_\alpha({\bf t}).
\ee
\end{proposition}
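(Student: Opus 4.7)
The plan is to combine Theorem~\ref{TMZN} with Lemma~\ref{virtos}: the first gives $\tau_\alpha({\bf t},{\bf s})$ as a translation of times applied to $\tau_\alpha({\bf t})$, and the second rewrites such a translation as a Virasoro group action, which I then identify with $\widehat{V}_\alpha({\bf s})$ via the specific formula \eqref{Vopp} by checking that the associated formal series agrees with $f_\alpha(z)$ of \eqref{fa}.

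First I would rewrite the translation of Theorem~\ref{TMZN} as an exponential differential operator. Since $q_j({\bf s})=0$ for $j<1$, only the times $t_{2k+1}$ with $k\geq \alpha+1$ are shifted, by amount $\hbar^{-1}q_{k-\alpha}({\bf s})/(2k+1)!!$; after reindexing $k\mapsto k+\alpha$ in the generating series
\[
v^\alpha(z)=\sum_{k=1}^\infty q_k({\bf s})\frac{z^{2k+2\alpha+1}}{(2k+2\alpha+1)!!},
\]
the shift coefficient is precisely $v^\alpha_{2k+1}$, so
\[
\tau_\alpha({\bf t},{\bf s})=\exp\left(\hbar^{-1}\sum_{k\geq \alpha+1} v^\alpha_{2k+1}\frac{\p}{\p t_{2k+1}}\right)\cdot \tau_\alpha({\bf t}).
\]
Next I apply Lemma~\ref{virtos}: it associates to $v^\alpha(z)$ a unique series $f(z;{\bf a})$ via \eqref{vfunc}, namely
$f(z;{\bf a})=(z^{2\alpha+1}+(2\alpha+1)v^\alpha(z))^{1/(2\alpha+1)}$, and converts the above translation exponential into $\exp(\sum_{k>0} a_{2k}\widehat{L}_{2k})\cdot \tau_\alpha({\bf t})$. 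By the definition \eqref{V} of $\Xi^{-1}$ and \eqref{Vopp}, this Virasoro group element equals $\widehat{V}_\alpha({\bf s})$ provided that the associated series equals the $f_\alpha(z)$ in \eqref{fa}. This final identification is the content of \eqref{vv}: term-wise substitution of the integral representation $1/\Gamma(\alpha+k+3/2)$ into $v^\alpha(z)$, combined with $(2k+1)!!=2^{k+1}\Gamma(k+3/2)/\sqrt\pi$, converts the power series into the Hankel integral, and then \eqref{ftov} yields exactly \eqref{fa}.

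The only delicate point is to verify that the hypotheses of Lemma~\ref{virtos} are genuinely satisfied: the lemma requires the Virasoro generators $\widehat L_{2k}$ to be of \emph{even} index, which corresponds to $f(z;{\bf a})$ being an odd power series in $z$. This is transparent in our setting, because $v^\alpha(z)$ contains only odd powers of $z$, hence $z^{2\alpha+1}+(2\alpha+1)v^\alpha(z)$ is an odd series, and its $(2\alpha+1)$-th root is again an odd series in $z+z^3{\mathbb Q}[{\bf s}][\![z^2]\!]$; consequently the shifts involve only odd times $t_{2k+1}$ with $k>\alpha$, as required. Granting these checks, the chain
\[
\tau_\alpha({\bf t},{\bf s})\;\stackrel{\text{Thm.\ref{TMZN}}}{=}\;\exp\bigl(\hbar^{-1}\sum v^\alpha_{2k+1}\tfrac{\p}{\p t_{2k+1}}\bigr)\tau_\alpha({\bf t})\;\stackrel{\text{Lem.\ref{virtos}}}{=}\;\exp\bigl(\textstyle\sum a_{2k}\widehat L_{2k}\bigr)\tau_\alpha({\bf t})\;=\;\widehat V_\alpha({\bf s})\,\tau_\alpha({\bf t})
\]
establishes the proposition, with the nontrivial computational work already packaged into Lemma~\ref{virtos} and the integral identities derived in \eqref{vv}.
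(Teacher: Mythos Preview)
Your argument is correct and follows exactly the route the paper takes: the paper's one-line ``Then from Lemma~\ref{virtos} one has'' presupposes precisely the chain you spell out, namely rewriting the time-translation of Theorem~\ref{TMZN} via the generating series $v^\alpha(z)$, invoking Lemma~\ref{virtos} to trade it for an even Virasoro group element, and then identifying that element with $\widehat V_\alpha({\bf s})=\Xi^{-1}(f_\alpha(z))$ through \eqref{ftov}--\eqref{vv}. Your explicit check that $f_\alpha(z)$ is odd (so that only $\widehat L_{2k}$ appear) is a detail the paper leaves implicit but is indeed needed.
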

Note, that $f_\alpha(z)$ and ${\widehat V}_{\alpha}({\bf s}) $ do not depend on $\hbar$.

\section{Cut-and-join operators}\label{S2}
\subsection{Proof of Theorem \ref{MT}}
For the Virasoro group element \eqref{Virg} we introduce operators
\begin{equation}\label{LJa}
\begin{split}
{\widehat{J}}_{k}^{{\bf a}}=\widehat{V}_{\bf a}  \widehat{J}_{k} \widehat{V}_{\bf a}^{-1}. 
\end{split}
\end{equation}
From the commutation relations of the Heisenberg--Virasoro algebra ${\mathcal L}$ it follows that
these are linear combinations of the operators $\widehat{J}_{m}$.
Let us also introduce the coefficients
\be
\rho[k,m]= [z^m] f(z;{\bf{a}})^k,
\ee
where $f(z,{\bf a})=\Xi(\widehat{V}_{\bf a})$. They are polynomials in parameters ${\bf a}$. Then,
\begin{lemma}[\cite{H3}]\label{lemma32}
\begin{equation}
 \begin{aligned}
 \widehat{J}_{k}^{{\bf a}}&= \sum_{m=k}^\infty \rho[k,m] \widehat{J}_{m},                                                     \quad \quad k\in {\mathbb Z}.\\
\end{aligned}
\end{equation}
\end{lemma}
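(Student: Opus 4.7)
The plan is to reduce the operator identity to a calculation with formal power series in $z$, via the adjoint action of the Virasoro algebra on the Heisenberg subalgebra. The key observation is that the Heisenberg--Virasoro commutation rule $[\widehat{L}_m,\widehat{J}_k] = -k\widehat{J}_{m+k}$ from \eqref{comre} has exactly the same structure constants as the commutator of the differential operators ${\mathtt l}_m$ defined in \eqref{virw} with multiplication by the monomial $z^k$: a direct computation using ${\mathtt l}_m = -z^m(z\partial_z + (m+1)/2)$ yields $[{\mathtt l}_m, z^k] = -k\, z^{m+k}$. Hence the linear map $\widehat{J}_k \mapsto z^k$ intertwines the adjoint $\widehat{L}_m$-action on the Heisenberg ideal with the adjoint ${\mathtt l}_m$-action on multiplication operators on ${\mathbb C}[\![z,z^{-1}]\!]$.

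Next I would expand both conjugations via the BCH series: setting $A = \sum_k a_k \widehat{L}_k$ and $\tilde A = \sum_k a_k {\mathtt l}_k$,
\be
\widehat{V}_{\bf a}\, \widehat{J}_k\, \widehat{V}_{\bf a}^{-1} = \sum_{n\geq 0}\frac{1}{n!}\,\mathrm{ad}_A^n(\widehat{J}_k), \qquad e^{\tilde A} z^k e^{-\tilde A} = \sum_{n\geq 0}\frac{1}{n!}\,\mathrm{ad}_{\tilde A}^n(z^k),
\ee
and the intertwining step above shows that these two series agree term by term under $\widehat{J}_m \leftrightarrow z^m$. On the model side, conjugation by an invertible operator is multiplicative, so
\be
e^{\tilde A} z^k e^{-\tilde A} = \left(e^{\tilde A} z\, e^{-\tilde A}\right)^k = f(z;{\bf a})^k
\ee
by the definition $f(z;{\bf a}) = \Xi(\widehat{V}_{\bf a})$ in \eqref{V}. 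Extracting the $z^m$-coefficient and translating back through the correspondence gives $\widehat{J}_k^{\bf a} = \sum_m [z^m]\!\left(f(z;{\bf a})^k\right) \widehat{J}_m = \sum_m \rho[k,m]\,\widehat{J}_m$.

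The lower bound $m\geq k$ follows because $f(z;{\bf a}) \in z + z^2{\mathbb C}[{\bf a}][\![z]\!]$ has leading term $z$, so $f^k$ has leading term $z^k$ for every $k\in\mathbb Z$ (when $k<0$ one inverts a formal series of the form $1+O(z)$ as a formal Laurent series). The step requiring the most care is the passage from matching structure constants of the adjoint action to matching the full exponentiated conjugation; the cleanest route is to view the identity as holding order by order in the formal parameters $a_k$, where it follows by induction on the number of nested commutators, each application of $\mathrm{ad}$ being transported across the correspondence by the intertwining relation established above.
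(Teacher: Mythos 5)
Your argument is correct and is essentially the standard (and intended) one: the paper does not prove this lemma but cites \cite{H3}, and the mechanism you use --- the intertwining of $\mathrm{ad}_{\widehat L_m}$ on the Heisenberg generators with $\mathrm{ad}_{{\mathtt l}_m}$ on monomials, so that conjugation of $\widehat J_k$ corresponds to substituting $f(z;{\bf a})$ and expanding $f^k$ --- is exactly the mechanism the paper itself invokes later in the generating-function form $\widehat V\,\widehat J(z)\,\widehat V^{-1}=h'(z)\widehat J(h(z))$. Your handling of the lower bound $m\geq k$ via $f\in z+z^2{\mathbb C}[{\bf a}][\![z]\!]$ and of the order-by-order convergence in the $a_k$ is also the right way to make the formal manipulations rigorous.
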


Let us consider the coefficients $\rho_\alpha[k,m]$ for the functions $f_\alpha(z)$ given by \eqref{fa},
\be
\rho_\alpha[k,m]= [z^m] f_\alpha(z)^k
\ee
and $ \widehat{J}_{k}^{\alpha}= \sum_{m=k}^\infty \rho_\alpha[k,m] \widehat{J}_{m}$. Then 
\be
 \widehat{J}_{k}^{\alpha}= {\widehat V}_{\alpha}({\bf s}) \widehat{J}_{k} {\widehat V}_{\alpha}({\bf s})^{-1}.
\ee
Therefore, the operators \eqref{Wpri} can be represented as
\be
{\widehat{W}_\alpha}({\bf s})= {\widehat V}_{\alpha}({\bf s}) \widehat{W}_{\alpha} {\widehat V}_{\alpha}({\bf s})^{-1},
\ee
where ${\widehat{W}_\alpha}$ are given by \eqref{CAJ}. In particular, we have ${\widehat{W}_\alpha}({\bf 0})={\widehat{W}_\alpha}$.
\begin{proof}[Proof of Theorem \ref{MT}]
From  Theorem \ref{TW}, Lemma \ref{virtos}, and Proposition \ref{propV} we have
\begin{equation}
\begin{split}
\tau_\alpha({\bf t}, {\bf s})&={\widehat V}_{\alpha}({\bf s}) \exp\left({\hbar \widehat{W}_{\alpha}}\right)\cdot 1\\
&={\widehat V}_{\alpha}({\bf s}) \exp\left({\hbar \widehat{W}_{\alpha}}\right) {\widehat V}_{\alpha}({\bf s})^{-1}\cdot 1\\
&= \exp\left(\hbar {\widehat V}_{\alpha}({\bf s}) \widehat{W}_{\alpha} {\widehat V}_{\alpha}({\bf s})^{-1}\right)\cdot 1.  
\end{split}
\end{equation}
\end{proof}

\begin{remark}
It is possible to include the class $\kappa_0$ into the obtained cut-and-join description. For this purpose it is enough to use the elements of the Virasoro group of the form $\exp({\sum_{k\geq0} a_k \widehat{L}_k })$. We will not consider this deformation here.
\end{remark}

Algebraic topological recursion \eqref{pp} immediately follows from the $\hbar$ expansion of \eqref{taum}. In the polynomials  $\tau_{\alpha}^{(p)}({\bf t},{\bf s}) $ one can further separate the contributions with a given numbers of marked points and specify the recursion for these contributions.

\subsection{Coefficients of the cut-and-join operators}
Let us find other expressions for the cut-and-join operators ${\widehat{W}_\alpha}({\bf s})$,  which can be more convenient. Consider the {\em odd bosonic current}
\begin{equation}\label{bc}
\begin{split}
\widehat{J}(z)&=\sum_{k=0}^\infty\left((2k+1) t_{2k+1}z^{2k}+\frac{1}{z^{2k+2}}\frac{\p}{\p t_{2k+1}}\right)\\
&=\sum_{k\in{\mathbb Z}_{\odd}} \frac{\widehat{J}_k}{z^{k+1}}.
\end{split}
\end{equation}
If the Virasoro group operator $\widehat{V}$ associated to a series $f(z)$ by \eqref{V} contains only the even components $\widehat{L}_{2k}$, we have
\be
\widehat{J}^{\bf a}(z)=\widehat{V} \widehat{J}(z) \widehat{V}^{-1}= h'(z) \widehat{J}(h(z)),
\ee
where $h(z)$ is a series, inverse to $f(z)$, see \eqref{hfun}.
Here
\be
\widehat{J}^{\bf a}(z)=\sum_{k\in {\mathbb Z}}\frac{\widehat{J}_k^{\bf a}}{z^{k+1}},
\ee
where the operators $\widehat{J}_k^{\bf a}$ are given by \eqref{LJa}.

The cut-and-join operators \eqref{CAJ} are given by the residues
\be
{\widehat{W}_\alpha}=\frac{1}{2(2\alpha+1)}\res{\left(\frac{1}{z^{2\alpha-1}} \widehat{J}(z)_+\normordboson\widehat{J}(z)^2\normordboson+\frac{1}{4z^{2\alpha+1}}\widehat{J}(z)\right)},
\ee
where the normal ordering $\normordboson\dots\normordboson$ puts all $\widehat{J}_k$ with positive $k$ to the right of all $\widehat{J}_k$ with negative $k$, and for any formal series $g(z)=\sum_{k\in {\mathbb Z}}g_k z^k$ we define $\res{g(z)}=g_{-1}$ and $g(z)_+=\sum_{k\geq 0}g_k z^k$.

Let us consider the Virasoro algebra associated to the odd current $\widehat{J}(z)$,
\be
\widehat{L}^o(z)=:\sum_{k\in {\mathbb Z}_{\even}} \frac{\widehat{L}^o_k}{z^{k+2}},
\ee
where
\be
\widehat{L}^o(z)=\frac{1}{2} \normordboson \widehat{J}(z)^2\normordboson.
\ee
Operators $\widehat{L}^o_k$ satisfy the commutation relations of the Virasoro algebra.
Then the cut-and-join operators $\widehat{W}_\alpha$ are given by
\begin{equation}\label{pli}
\begin{split}
{\widehat{W}_\alpha}&=\frac{1}{2\alpha+1}\res{\left(\frac{1}{z^{2\alpha-1}} \widehat{J}(z)_+\widehat{L}^o(z)+\frac{1}{8z^{2\alpha+1}}\widehat{J}(z)\right)}\\
&=\frac{1}{2\alpha+1} \sum_{k=0}^\infty (2k+1) t_{2k+1}\left(\widehat{L}^o_{2k-2\alpha}+\frac{\delta_{k,\alpha}}{8}\right).
\end{split}
\end{equation}

Consider the Virasoro group operators $\widehat{V}_{\alpha}({\bf s})$ given by \eqref{Vopp}. They contain only even components $\widehat{L}_{2k}$. Then we have
\be
\widehat{V}_{\alpha}({\bf s}) \widehat{L}^o(z) \widehat{V}_{\alpha}({\bf s})^{-1}=h_\alpha'(z)^2  \widehat{L}^o(h_\alpha(z))+c_\alpha(z),
\ee
where $c_\alpha(z)\in {\mathbb Q}[{\bf s}][\![z]\!]$ are certain formal power series in $z$. Only $c_1(0)=s_1/30$ contributes to the cut-and-join operators ${\widehat{W}_\alpha}({\bf s})$.
Therefore, the cut-and-join operators ${\widehat{W}_\alpha({\bf s}})$ are given by
\begin{multline}
{\widehat{W}_\alpha({\bf s}})=\frac{1}{2\alpha+1}\res \left(
\frac{\left(h_\alpha'(z)\widehat{J}(h_\alpha(z))\right)_+}{z^{2\alpha-1}} \left(h_\alpha'(z)^2\widehat{L}^o(h_\alpha(z))+\frac{s_1}{30}\right)\right.\\
\left.+\frac{h_\alpha'(z)\widehat{J}(h_\alpha(z))}{8z^{2\alpha+1}}\right).
\end{multline}
We can rewrite them as
\be\label{opop}
\widehat{W}_\alpha({\bf s})=\frac{1}{2\alpha+1}\left(\sum_{m=0}^\infty  \sum_{k=0}^\infty A_{k,m}^\alpha({\bf s}) \widehat{J}_{2m-2k-1}\widehat{L}^o_{2k-2\alpha}+\sum_{k=-\alpha-1}^\infty C_k^\alpha({\bf s}) \widehat{J}_{2k+1}\right),
\ee
where the coefficients are given by the residues
\be
A_{k,m}^\alpha({\bf s})=\res \frac{1}{z^{2\alpha-1}} \left(h_\alpha'(z)h_\alpha(z)^{2k-2m}\right)_+ h_\alpha'(z)^2 h_\alpha(z)^{2\alpha-2k-2}
\ee
and
\be\label{Bmm}
C_k^\alpha({\bf s})=\frac{1}{8}\res \frac{1}{f_\alpha(z)^{2\alpha+1}z^{2k+2}}+\frac{s_1 \delta_{\alpha,1}}{30} \res  \frac{1}{f_1(z)z^{2k+2}}.
\ee
For ${\bf s}={\bf 0}$ we have  $A_{m,k}^\alpha({\bf 0})=\delta_{m,0}$ and $C_k^\alpha({\bf 0})=\frac{1}{8}\delta_{k,-\alpha-1}$, therefore, operators \eqref{opop} reduce to \eqref{pli}.
Polynomials $A_{k,m}^\alpha({\bf s})$ and $C_k^\alpha({\bf s})$ are homogeneous in ${\bf s}$ of degree $m$ and $k+\alpha+1$ respectively if we put $\deg s_k=k$.

For $k\geq m$ the coefficients $A_{k,m}^\alpha({\bf s})$ do not depend on $k$,
\begin{equation}
\begin{split}\label{Amm}
A_{k,m}^\alpha({\bf s})&=\res \frac{1}{z^{2\alpha-1}}  h_\alpha'(z)^3 h_\alpha(z)^{2\alpha-2m-2}\\
&=\res \frac{z^{2\alpha-2m-2}}{f_\alpha'(z)^2f_\alpha(z)^{2\alpha-1}}\\
&=:A_{m}^\alpha({\bf s})
\end{split}
\end{equation}
First coefficients $A_{m}^\alpha$, $A_{k,m}^\alpha$, and $C_k^\alpha$ are presented in Appendix \ref{AAA}.

Expression \eqref{opop} for the cut-and-join operators is rather convenient for computations. 
On the level $p$ of the recursion \eqref{pp} only terms with $k\leq\frac{(2\alpha+1)(p-1)}{2}+\alpha$ and $m\leq\frac{(2\alpha+1)(p-1)+1}{2}+\alpha$ contribute to the first summation and the terms with $k\leq \frac{(2\alpha+1)(p-1)-1}{2}$ contribute to the second summation. 
\begin{proposition}
For $p>0$ we have
\begin{multline}
\tau_{\alpha}^{(p)}({\bf t},{\bf s})=\frac{1}{p(2\alpha+1)}\left(
\sum_{m=0}^{\lfloor \frac{(2\alpha+1)(p-1)+1}{2}+\alpha\rfloor}  \sum_{k=0}^{\lfloor\frac{(2\alpha+1)(p-1)}{2}+\alpha\rfloor} A_{k,m}^\alpha({\bf s}) \widehat{J}_{2m-2k-1}\widehat{L}^o_{2k-2\alpha}\right.\\
\left.+\sum_{k=-\alpha-1}^{\lfloor \frac{(2\alpha+1)(p-1)-1}{2}\rfloor} C_k^\alpha({\bf s}) \widehat{J}_{2k+1}\right)\cdot  \tau_{\alpha}^{(p-1)}({\bf t},{\bf s}).
\end{multline}
\end{proposition}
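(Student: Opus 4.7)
The plan is to apply the algebraic topological recursion from the Corollary to Theorem \ref{MT} directly to the explicit form \eqref{opop} of the cut-and-join operator, and then truncate the infinite sums to their finite effective parts using the polynomial degree of $\tau_\alpha^{(p-1)}({\bf t},{\bf s})$. First I would write
\be
\tau_\alpha^{(p)}({\bf t},{\bf s})=\frac{1}{p}\widehat{W}_\alpha({\bf s})\cdot \tau_\alpha^{(p-1)}({\bf t},{\bf s})
\ee
and substitute \eqref{opop} into the right-hand side. The common prefactor $1/(p(2\alpha+1))$ matches the stated formula immediately, so the whole problem reduces to identifying which summands act nontrivially on $\tau_\alpha^{(p-1)}$.

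The key input is the homogeneity of $\tau_\alpha^{(p-1)}$: it is a polynomial of degree $(2\alpha+1)(p-1)$ in the grading $\deg t_k=k$, $\deg s_k=2k$, as noted right before the Corollary. Under this grading, $\widehat{J}_a$ and $\widehat{L}^o_a$ each shift total degree by $-a$, while $\deg A_{k,m}^\alpha({\bf s})=2m$ and $\deg C_k^\alpha({\bf s})=2(k+\alpha+1)$. A quick arithmetic check then confirms that every summand of $\widehat{W}_\alpha({\bf s})$ raises total degree by exactly $2\alpha+1$, consistent with $\tau_\alpha^{(p)}$ being homogeneous of degree $(2\alpha+1)p$. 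The remaining step is to observe that operators which would strictly lower the total degree by more than $(2\alpha+1)(p-1)$ must annihilate $\tau_\alpha^{(p-1)}$, so the corresponding summands drop out.

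I would then translate this principle into the two claimed truncations. The Virasoro mode $\widehat{L}^o_{2k-2\alpha}$ is, for $k>\alpha$, a strict degree-lowering operator by $2k-2\alpha$, and therefore vanishes on $\tau_\alpha^{(p-1)}$ unless $2k-2\alpha\leq(2\alpha+1)(p-1)$; this yields $k\leq\lfloor(2\alpha+1)(p-1)/2+\alpha\rfloor$. Independently, the composite $A_{k,m}^\alpha({\bf s})\widehat{J}_{2m-2k-1}\widehat{L}^o_{2k-2\alpha}$ produces, after acting on $\tau_\alpha^{(p-1)}$, a polynomial of degree $(2\alpha+1)p-2m$, and nonnegativity of this degree forces $m\leq\lfloor(2\alpha+1)p/2\rfloor=\lfloor(2\alpha+1)(p-1)/2+1/2+\alpha\rfloor$. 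For the linear piece $C_k^\alpha({\bf s})\widehat{J}_{2k+1}$, the same argument applied to the derivative $\widehat{J}_{2k+1}$ for $k\geq 0$ gives $k\leq\lfloor((2\alpha+1)(p-1)-1)/2\rfloor$; for $k<0$ the operator is pure multiplication by a variable, so those summands always contribute and the sum retains its starting point $k=-\alpha-1$.

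The step requiring the most care, rather than a genuine obstacle, is ensuring that the two bounds on $m$ and on $k$ in the $(k,m)$-sum are applied independently, and that the integer parts line up exactly as written. In particular, for $k\leq\alpha$ the mode $\widehat{L}^o_{2k-2\alpha}$ no longer lowers degree, so the $m$-bound alone must keep the inner sum finite; this works because the $m$-bound is extracted from the total operator degree and is insensitive to the sign of $2k-2\alpha$. Once this independence and the floor-function arithmetic are verified, the proposition follows by simply discarding the vanishing summands from \eqref{opop}.
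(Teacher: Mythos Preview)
Your argument is correct and is precisely the degree-counting justification the paper leaves implicit: the paper states the three bounds in prose immediately before the proposition without further proof, and your proposal fills in exactly that homogeneity argument. The arithmetic identifications $\lfloor (2\alpha+1)p/2\rfloor=\lfloor (2\alpha+1)(p-1)/2+1/2+\alpha\rfloor$ and the separate handling of the $k$- and $m$-bounds match the paper's stated cutoffs, so nothing further is needed.
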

Moreover, only a finite number of the terms in the Virasoro operators $\widehat{L}^o_{2k-2\alpha}$ contribute, therefore, the algebraic topological recursion \eqref{pp} is given by polynomial differential operators acting on polynomials.

\subsection{Weil--Petersson volumes}
Let us consider the case with insertion of only one $\kappa$ class, namely $\kappa_1$. It describes the Weil--Petersson volumes of the moduli spaces of (super) hyperbolic Riemann surfaces with geodesic boundaries, see \cite{NorbS} for more details.

These volumes for the moduli spaces of hyperbolic Riemann surfaces (for $\alpha=1$) and for  the moduli spaces of super hyperbolic Riemann surfaces (for $\alpha=0$) are given by the integrals
\be
V_{g,n}^{\alpha}(L_1,\dots,L_n)=\int_{\overline{\mathcal{M}}_{g,n}} \Theta_{g,n}^{1-\alpha}\exp\left(2\pi^2 \kappa_1+\frac{1}{2}\sum_{j=1}^n L_j^2 \psi_j\right).
\ee
Here $L_1,\dots,L_n$ are the lengths of  the geodesic boundary components. These volumes are given \cite{NorbS} by the specifications of polynomials (\ref{FE})
\be
V_{g,n}^{\alpha}(L_1,\dots,L_n)=n! F_{g,n}^\alpha({\bf T}, {\bf s})\Big|_{s_k=2\pi^2 \delta_{k,1},\, 
T_k=\frac{1}{2^k k!}\sum_{j=1}^n L_j^{2k}}.
\ee
This case corresponds to $s_k=s \delta_{k,1}$, and in Theorem \ref{TMZN} corresponding generating functions are
described by the following shifts of the variables ${\bf t}$,
\be
t_{2k+1}\mapsto t_{2k+1}-\frac{1}{\hbar}\frac{(-s)^{k-\alpha}}{(2k+1)!!(k-\alpha)!}.
\ee
Let us introduce
\be
q=\sqrt{-2s}.
\ee
Then we have
\begin{equation}
\begin{split}
{v}^\alpha(z)&=\frac{z^{2\alpha+1}}{2\alpha+1}-\frac{1}{2i \sqrt{2\pi}}\int_\gamma\frac{{\hbox{d}} t}{t^{\alpha+\frac{3}{2}}}\exp\left(\frac{z^2 t}{2}-\frac{s}{t}\right)\\
&=\frac{z^{2\alpha+1}}{2\alpha+1}-\sqrt{\frac{\pi}{2}}\left(\frac{z}{q}\right)^{\alpha+\frac{1}{2}}I_{\alpha+1/2}(qz),
\end{split}
\end{equation}
where $I_{\alpha+1/2}$ is the modified Bessel function. Therefore, the functions $f_\alpha(z)$, which describe the Virasoro group elements in Proposition \ref{propV}, satisfy
\be
\frac{f_\alpha(z)^{2\alpha+1}}{2\alpha+1}=\sum_{k=0}^\infty\frac{(-s)^k}{k!}\frac{z^{2k+2\alpha+1}}{(2k+2\alpha+1)!!}.
\ee

For $\alpha=0$ we have
\begin{equation}
\begin{split}\label{fs}
f_0(z)&=\sum_{k=0}^\infty\frac{(-s)^k}{k!}\frac{z^{2k+1}}{(2k+1)!!}\\
&=\frac{1}{q}\sinh(qz).
\end{split}
\end{equation}
In this case it is possible to find all coefficients $C_m^0$ and $A_m^0$, given by \eqref{Bmm} and \eqref{Amm}, explicitly in terms of the Euler numbers and Bernoulli polynomials. Recall that the Bernoulli polynomials $B_k(x)$ are given by the generating function
\be
\frac{ze^{xz}}{e^z-1}=\sum_{k=0}^\infty B_k(x)\frac{z^k}{k!}.
\ee
For the Euler numbers $E_k$ we have
\be
\frac{2}{e^z-e^{-z}}=\sum_{k=0}^\infty E_k\frac{z^k}{k!}.
\ee
\begin{proposition}
\begin{equation}
\begin{split}
A_m^0({\bf s})\Big|_{s_k=s \delta_{k,1}}&=-\frac{q^{2m}}{(2m+1)!}E_{2m+2},\\
C_m^0({\bf s})\Big|_{s_k=s \delta_{k,1}}&=\frac{(2q)^{2m+2}}{8(2m+2)!}  B_{2m+2}(1/2).
\end{split}
\end{equation}
\end{proposition}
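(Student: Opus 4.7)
The plan is to substitute the explicit closed form for $f_0$ obtained at $s_k = s\delta_{k,1}$ into the defining residue formulas \eqref{Amm} and \eqref{Bmm} with $\alpha = 0$, and then match the resulting Laurent expansions against the two classical generating functions quoted just before the proposition. From \eqref{fs} one has $f_0(z) = \sinh(qz)/q$, hence $f_0'(z) = \cosh(qz)$, so both residues reduce to residues of elementary hyperbolic expressions.

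For $C_m^0$, the formula \eqref{Bmm} at $\alpha=0$ (the second summand vanishes because of the $\delta_{\alpha,1}$) becomes $C_m^0 = \frac{1}{8}\res \frac{q}{z^{2m+2}\sinh(qz)}$. The generating function for Bernoulli polynomials specialized at $x = 1/2$ simplifies to $\frac{ze^{z/2}}{e^z-1} = \frac{z}{2\sinh(z/2)}$, so after rescaling $z \mapsto 2qz$ one finds
\be
\frac{q}{\sinh(qz)} = \frac{1}{z}\sum_{k\ge 0} B_k(1/2)\,\frac{(2qz)^k}{k!}.
\ee
The $z^{-1}$-coefficient of $z^{-2m-2}$ times this series comes from the $k=2m+2$ term, producing exactly $\frac{(2q)^{2m+2}B_{2m+2}(1/2)}{8(2m+2)!}$, which is the claimed value.

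For $A_m^0$, the formula \eqref{Amm} at $\alpha=0$ reads $A_m^0 = \res \frac{\sinh(qz)/q}{z^{2m+2}\cosh^2(qz)}$. Using the elementary identity $\sinh(qz)/\cosh^2(qz) = -q^{-1}\frac{d}{dz}(1/\cosh(qz))$ together with the vanishing of the residue of a total derivative, namely the integration-by-parts identity $\res \frac{g'(z)}{z^{2m+2}} = (2m+2)\res \frac{g(z)}{z^{2m+3}}$, one obtains
\be
A_m^0 = -\frac{2m+2}{q^2}\res \frac{1}{z^{2m+3}\cosh(qz)}.
\ee
The expansion $1/\cosh(z) = \sum_k E_k z^k/k!$ then yields the coefficient $E_{2m+2}q^{2m+2}/(2m+2)!$, and collecting factors gives $-q^{2m}E_{2m+2}/(2m+1)!$, as asserted.

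I do not anticipate a real obstacle: the whole argument is a short bookkeeping calculation once one recognizes that both residues are captured by the two generating functions quoted above the proposition. The only conceptual remark worth making is that the case $\alpha=0$ is exceptional precisely because \eqref{fs} gives $f_0$ in closed elementary form at $s_k = s\delta_{k,1}$; for $\alpha=1$ the function $f_1$ is a modified Bessel function of order $3/2$ rather than a pure exponential, and no analogously compact closed formula for $A_m^1$ or $C_m^1$ should be expected from this method.
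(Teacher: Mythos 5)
Your proof is correct and follows essentially the same route as the paper: substitute $f_0(z)=\sinh(qz)/q$ into the residue formulas, recognize $f_0/(f_0')^2$ as $-q^{-2}\partial_z(1/\cosh(qz))$ for the Euler numbers and $1/(8f_0)$ as the Bernoulli generating function at $x=1/2$. The only cosmetic difference is that you phrase the derivative step as an integration-by-parts identity for residues, whereas the paper differentiates the series $\sum_k E_k(qz)^k/k!$ directly and reads off the coefficient.
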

\begin{proof}
From \eqref{Amm} and \eqref{Bmm} it follows that $A_m^0$ and $C_m^0$ are the coefficients of the series expansion of $\frac{f_0(z)}{f'_0(z)^2}$ and $\frac{1}{8f_0(z)}$ respectively. Then for \eqref{fs} we have
\begin{equation}
\begin{split}
\frac{f_0(z)}{f'_0(z)^2}&=-\frac{1}{q^2}\frac{\p}{\p z} \frac{1}{f'_0(z)}=-\sum_{k=0}^\infty \frac{E_{k+1}z^k q^{k-1}}{k!},\\
\frac{1}{8f_0(z)}&=\frac{1}{8}\frac{2q e^{qz}}{e^{2qz}-1}=\frac{1}{8}\sum_{k=0}^\infty \frac{B_{k}(1/2) (2qz)^k}{k!}.
\end{split}
\end{equation}
\end{proof}
It is also possible to find the coefficients $A^0_{k,m}$ for $k<m$, for instance
\be
A^0_{m-1,m}=\frac{q^{2m}}{(2m)!}\left(2^{2m}B_{2m}(1/2)-\frac{E_{2m+2}}{2m+1}\right).
\ee
All higher coefficients can also be expressed in terms of the Euler numbers and Bernoulli polynomials.

For $\alpha=1$ we have
\begin{equation}
\begin{split}
f_1(z)&=\left(3\frac{z q\cosh(qz)-\sinh(qz)}{q^3}\right)^\frac{1}{3}\\
&=z+\frac{1}{30}\,{q}^{2}{z}^{3}+{\frac {1}{12600}}\,{q}^{4}{z}^{5}+{\frac {1}{
226800}}\,{q}^{6}{z}^{7}-{\frac {289}{2095632000}}\,{q}^{8}{z}^{9}+O \left( {z}^{11}  \right).
\end{split}
\end{equation}
In this case we have
\begin{proposition}
\be
A_m^1({\bf s})\Big|_{s_k=s \delta_{k,1}}=-12\frac{(2q)^{2m}}{(2m+1)!}B_{2m+2}(1/2).
\ee 
\end{proposition}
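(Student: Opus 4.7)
The plan is to compute $A_m^1({\bf s})|_{s_k = s\delta_{k,1}}$ directly from the residue formula \eqref{Amm}, which for $\alpha = 1$ reads $A_m^1 = \res \frac{z^{-2m}}{f_1'(z)^2 f_1(z)}$, and then match the answer against the generating function for $B_k(1/2)$ already used in the $\alpha = 0$ proposition. The key observation is that although $f_1(z)$ itself involves a cube root, the combination $f_1'^2 f_1$ turns out to be elementary.

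First I would differentiate the identity $f_1(z)^3 = \frac{3(zq\cosh(qz)-\sinh(qz))}{q^3}$ to obtain the clean relation $f_1(z)^2 f_1'(z) = \frac{z\sinh(qz)}{q}$. Squaring and dividing by $f_1(z)^3$ gives
\begin{equation*}
\frac{1}{f_1'(z)^2 f_1(z)} = \frac{f_1(z)^3}{(f_1^2 f_1')^2} = \frac{3(zq\cosh(qz)-\sinh(qz))}{q z^2 \sinh^2(qz)},
\end{equation*}
so that $A_m^1 = \frac{3}{q}\res \frac{zq\cosh(qz) - \sinh(qz)}{z^{2m+2}\sinh^2(qz)}$.

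Next I would exploit the identity $\frac{d}{dz}\left(\frac{z}{\sinh(qz)}\right) = \frac{\sinh(qz) - zq\cosh(qz)}{\sinh^2(qz)}$, which rewrites the integrand as a total derivative times $-1/z^{2m+2}$. Integration by parts for formal residues ($\res f g' = -\res f' g$) then produces
\begin{equation*}
A_m^1 = -\frac{3}{q}\res \frac{1}{z^{2m+2}}\frac{d}{dz}\!\left(\frac{z}{\sinh(qz)}\right) = -\frac{3(2m+2)}{q}\res \frac{1}{z^{2m+2}\sinh(qz)}.
\end{equation*}

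Finally, I would invoke the generating function for $B_k(1/2)$ in the form $\frac{qz}{\sinh(qz)} = \sum_{k\ge 0} B_k(1/2)\frac{(2qz)^k}{k!}$ (the identity already used for $C_m^0$ in the previous proposition) to extract the coefficient of $z^{-1}$: only the term $k=2m+2$ contributes, giving $\res \frac{1}{z^{2m+2}\sinh(qz)} = \frac{B_{2m+2}(1/2)(2q)^{2m+2}}{q(2m+2)!}$. Substituting and simplifying $\frac{3(2m+2)(2q)^{2m+2}}{q^2(2m+2)!} = \frac{12(2q)^{2m}}{(2m+1)!}$ yields the claimed formula. The only nonroutine step is the recognition of the total-derivative identity in the middle stage; everything else is mechanical manipulation plus a classical generating function, so I do not anticipate any serious obstacle.
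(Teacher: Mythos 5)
Your proof is correct, and I have checked it against the paper's conventions: formula \eqref{Amm} for $\alpha=1$ indeed gives $A_m^1=\res \frac{z^{-2m}}{f_1'(z)^2f_1(z)}$, differentiating $f_1(z)^3=\frac{3(zq\cosh(qz)-\sinh(qz))}{q^3}$ gives $f_1(z)^2f_1'(z)=\frac{z\sinh(qz)}{q}$, and the total-derivative identity plus $\res(fg')=-\res(f'g)$ and the expansion $\frac{qz}{\sinh(qz)}=\sum_{k\geq0}B_k(1/2)\frac{(2qz)^k}{k!}$ yield exactly $-12\frac{(2q)^{2m}}{(2m+1)!}B_{2m+2}(1/2)$; spot checks at $m=0,1,2$ against $A_0^1=1$, $A_1^1=\frac{7}{15}s_1$, $A_2^1=\frac{31}{210}s_1^2+\frac{11}{105}s_2$ with $s=-q^2/2$ all agree. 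The paper actually states this proposition \emph{without} proof, so there is no argument of the author's to compare with; the closest model is the proof of the $\alpha=0$ proposition, where $A_m^0$ is read off directly as a Taylor coefficient of the elementary function $\frac{f_0(z)}{f_0'(z)^2}$. Your argument is the natural extension of that strategy, and the genuinely new content you supply is the observation that, although $f_1$ is only a cube root, the combination $\frac{1}{f_1'^2f_1}=\frac{f_1^3}{(f_1^2f_1')^2}$ is elementary, together with the integration-by-parts step that converts the resulting $\sinh^{-2}$ expression into the standard Bernoulli generating function $\frac{qz}{\sinh(qz)}$. This fills a gap the paper leaves open.
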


\appendix
\section{Coefficients of the cut-and-join operators}\label{AAA}

\begin{flalign}
  \begin{aligned}
A_{{1}}^0&=\frac{5}{3}\,s_{{1}},\\
A_{{2}}^0&={\frac {61}{30}}\,{s_{{1}}}^{2}+\frac{3}{5}\,s_{{2}},\\
A_{{3}}^0&={\frac {277}{126}}\,{s_{{1}}}^{3}+{\frac {479}{315}}\,s_{{2}
}s_{{1}}+{\frac {13}{105}}\,s_{{3}},\\
A_{{4}}^0&={\frac {43}{135}}\,s_{{3}}s_{{1}}+{\frac {50521}{22680}}\,{s
_{{1}}}^{4}+{\frac {4757}{1890}}\,s_{{2}}{s_{{1}}}^{2}+{\frac {17}{945
}}\,s_{{4}}+{\frac {529}{1890}}\,{s_{{2}}}^{2},\\
A_{{5}}^0&={\frac {1}{495}}\,s_{{5}}+{\frac {97}{2079}}\,s_{{4}}s_{{1}}
+{\frac {2011}{17325}}\,s_{{3}}s_{{2}}+{\frac {7891}{14850}}\,s_{{3}}{
s_{{1}}}^{2}+{\frac {98489}{103950}}\,{s_{{2}}}^{2}s_{{1}}+{\frac {
357839}{103950}}\,s_{{2}}{s_{{1}}}^{3}\\
&+{\frac {540553}{249480}}\,{s_{{
1}}}^{5}.
\end{aligned}&&&
\end{flalign}

\begin{flalign}
  \begin{aligned}
A_{0,1}^0({\bf s})&=\frac{1}{3}s_1,\\
A_{0,2}^0({\bf s})&={\frac {7}{90}}\,{s_{{1}}}^{2}+\frac{1}{15}\,s_{{2}},\\
A_{1,2}^0({\bf s})&={\frac {71}{90}}\,{s_{{1}}}^{2}+\frac{1}{5}\,s_{{2}},\\
A_{0,3}^0({\bf s})&={\frac {31}{1890}}\,{s_{{1}}}^{3}+{\frac {11}{315}}\,s_{{2}}s_{{1}}+{\frac {1}{105}}\,s_{{3}},\\
A_{1,3}^0({\bf s})&={\frac {37}{126}}\,{s_{{1}}}^{3}+{\frac {89}{315}}\,s_{{2}}s_{{1}}+\frac{1}{35}\,s_{{3}},\\
A_{2,3}^0({\bf s})&={\frac {2171}{1890}}\,{s_{{1}}}^{3}+{\frac {223}{315}}\,s_{{2}}s_{{1}}+\frac{1}{21}\,s_{{3}}.
\end{aligned}&&&
\end{flalign}

\begin{flalign}
  \begin{aligned}
 C_0^0&=\frac{1}{24}\,s_{{1}},\\
 C_1^0&={\frac {7}{720}}\,{s_{{1}}}^{2}+{\frac {1}{120}}\,s_{{2}},\\
 C_2^0&={\frac {31}{15120}}\,{s_{{1}}}^{3}+{\frac {11}{2520}}\,s_{{2}}s_{{1}}+{\frac {1}{840}}\,s_{{3}},\\
C_3^0&={\frac {127}{302400}}\,{s_{{1}}}^{4}+{\frac {113}{75600}}\,s_{
{2}}{s_{{1}}}^{2}+{\frac {1}{1512}}\,s_{{3}}s_{{1}}+{\frac {37}{75600}}\,{s_{{2}}}^{2}+{\frac {1}{7560}}\,s_{{4}},\\
C_4^0&={\frac {19}{249480}}\,s_{{4}}s_{{1}}+{\frac {61}{415800}}\,s_{
{3}}s_{{2}}+{\frac {587}{2494800}}\,s_{{3}}{s_{{1}}}^{2}+{\frac {179}{
498960}}\,{s_{{2}}}^{2}s_{{1}}+{\frac {1073}{2494800}}\,s_{{2}}{s_{{1}
}}^{3}\\
&+{\frac {1}{83160}}\,s_{{5}}+{\frac {73}{855360}}\,{s_{{1}}}^{5}.
\end{aligned}&&&
\end{flalign}

\begin{flalign}
  \begin{aligned}
A_{{1}}^1&={\frac {7}{15}}\,s_{{1}},\\
A_{{2}}^1&={\frac {31}{210}}\,{s_{{1}}}^{2}+{\frac {11}{105}}\,s_{{2}},\\
A_{{3}}^1&={\frac {127}{3150}}\,{s_{{1}}}^{3}+{\frac {113}{1575}}\,s_{{
2}}s_{{1}}+{\frac {1}{63}}\,s_{{3}},\\
A_{{4}}^1&={\frac {587}{51975}}\,s_{{3}}s_{{1}}+{\frac {73}{7128}}\,{s_
{{1}}}^{4}+{\frac {1073}{34650}}\,s_{{2}}{s_{{1}}}^{2}+{\frac {19}{
10395}}\,s_{{4}}+{\frac {179}{20790}}\,{s_{{2}}}^{2},\\
A_{{5}}^1&={\frac {23}{135135}}\,s_{{5}}+{\frac {899}{675675}}\,s_{{4}}
s_{{1}}+{\frac {12637}{4729725}}\,s_{{3}}s_{{2}}+{\frac {5249}{1051050
}}\,s_{{3}}{s_{{1}}}^{2}+{\frac {40961}{5255250}}\,{s_{{2}}}^{2}s_{{1}
}\\
&+{\frac {1540453}{141891750}}\,s_{{2}}{s_{{1}}}^{3}
+{\frac {1414477}{
567567000}}\,{s_{{1}}}^{5}.
\end{aligned}&&&
\end{flalign}

\begin{flalign}
  \begin{aligned}
A_{0,1}^1({\bf s})&=\frac{1}{15}\,s_{{1}},\\
A_{0,2}^1({\bf s})&={\frac {13}{3150}}\,{s_{{1}}}^{2}+{\frac {1}{105}}\,s_{{2}},\\
A_{1,2}^1({\bf s})&={\frac {137}{3150}}\,{s_{{1}}}^{2}+\frac{1}{35}\,s_{{2}},\\
A_{0,3}^1({\bf s})&={\frac {41}{141750}}\,{s_{{1}}}^{3}+{\frac {1}{675}}\,s_{{2}}s_{{1}}+{
\frac {1}{945}}\,s_{{3}},\\
A_{1,3}^1({\bf s})&={\frac {37}{9450}}\,{s_{{1}}}^{3}+{\frac {17}{1575}}\,s_{{2}}s_{{1}}+{
\frac {1}{315}}\,s_{{3}},\\
A_{2,3}^1({\bf s})&={\frac {2419}{141750}}\,{s_{{1}}}^{3}+{\frac {131}{4725}}\,s_{{2}}s_{{
1}}+{\frac {1}{189}}\,s_{{3}}.\\
\end{aligned}&&&
\end{flalign}

\begin{flalign}
  \begin{aligned}
   C_{-1}^1&={\frac {7}{120}}\,s_{{1}},\\
   C_{0}^1&={\frac {137}{25200}}\,{s_{{1}}}^{2}+{\frac {1}{280}}\,s_{{2}},\\
   C_{1}^1&={\frac {37}{75600}}\,{s_{{1}}}^{3}+{\frac {17}{12600}}\,s_{{2}}s_{{1}}+{\frac {1}{2520}}\,s_{{3}},\\
   C_{2}^1&={\frac {197}{1247400}}\,s_{{3}}s_{{1}}+{\frac {20539}{
87318000}}\,s_{{2}}{s_{{1}}}^{2}+{\frac {1}{27720}}\,s_{{4}}+{\frac {
163}{1940400}}\,{s_{{2}}}^{2}+{\frac {240043}{5239080000}}\,{s_{{1}}}^{4},\\
 C_{3}^1&=
{\frac {241}{16216200}}\,s_{{4}}s_{{1}}+{\frac {251}{
12612600}}\,s_{{3}}s_{{2}}+{\frac {100249}{3405402000}}\,s_{{3}}{s_{{1
}}}^{2}+{\frac {42187}{1135134000}}\,{s_{{2}}}^{2}s_{{1}}\\
&+{\frac {
113111}{3405402000}}\,s_{{2}}{s_{{1}}}^{3}+{\frac {1}{360360}}\,s_{{5}
}+{\frac {4477789\,{s_{{1}}}^{5}}{1021620600000}}.
\end{aligned}&&&
\end{flalign}

\bibliographystyle{alphaurl}
\bibliography{Weilref}

\end{document}